\definecolor{mylinkcolor}{RGB}{0,0,130}
\providecommand{\keywords}[1]{\textbf{\textit{Keywords ---}} #1}
\providecommand{\msc}[1]{\textbf{\textit{MSC 2010 ---}} #1}
\providecommand{\acknowledgements}[1]{\textbf{\textit{Acknowledgements ---}} #1}
\definecolor{darkred}{rgb}{0.3,0,0}
\definecolor{darkgreen}{rgb}{0,0.3,0}
\definecolor{darkblue}{rgb}{0,0,0.3}
\definecolor{pink}{rgb}{0.78,0.09,0.51} 
\definecolor{orange}{rgb}{1,0.6,0.0}
\definecolor{grey}{rgb}{0.4,0.4,0.4}
\definecolor{lightgray}{gray}{0.9}
\definecolor{aquamarine}{rgb}{0.4,0.8,0.65}
\newcommand{\R}{\mathbb{R}}
\newcommand{\field}{\mathbb{K}}
\newcommand{\N}{\mathbb{N}}
\newcommand{\bfx}{\bm{x}}
\newcommand{\xr}{\check{x}}
\newcommand{\bfxr}{\check{\bm{x}}}
\newcommand{\bfA}{\bm{A}}
\newcommand{\bfy}{\bm{y}}
\newcommand{\bfmu}{\bm{\mu}}
\newcommand{\nmu}{n_{\bfmu}}
\newcommand{\norm}[1]{||#1||_{\linSpace}}
\newcommand{\kkron}[2][k]{#2^{\otimes #1}}
\newcommand{\numTotal}[2]{t(#1,#2)}
\newcommand{\skkron}[2][k]{#2^{\otimes^\mathrm{s} #1}}
\newcommand{\kTensor}[1][k]{\bfA_#1}
\newcommand{\cU}{{U}}
\newcommand{\kolWidth}[3][n]{d_{#1}\left( #2;#3 \right)}
\newcommand{\linSpace}{V}
\newcommand{\linSpaceAlt}{U}
\newcommand{\subSet}{S}
\newcommand{\dist}{\mathrm{dist}}
\newcommand{\solmnf}{\mathcal{M}}
\renewcommand{\P}{\mathcal{P}}
\newcommand{\dec}[2][n]{\Gamma_{#1,#2}}
\DeclareMathOperator*{\img}{img}
\newcommand{\submnf}[2][n]{\widetilde{\mathcal{M}}_{#1,#2}}
\newcommand{\subspace}[2]{\mathcal{A}_{#1,#2}}
\newcommand{\vectorSet}[2]{\mathcal{V}_{#1,#2}}
\newcommand{\mapKron}[2]{K_{#1,#2}}
\newcommand{\mapLin}[2]{A_{#1,#2}}
\newcommand{\polyKolWidthOne}[4][n]{d^{\otimes}_{#1,#2}\left( #3;#4 \right)}
\newcommand{\decGen}[1][n]{d_{#1}}
\newcommand{\mapLinGen}[1]{A_{#1}}
\newcommand{\mapKronGen}[1]{K_{#1}}
\newcommand{\polyManiWidth}[4][n]{\delta^{\otimes}_{#1,#2} \left( #3;#4 \right)}
\newcommand{\xred}{\bfxr}
\newtheorem{lemma}{Lemma}
\newtheorem{example}{Example}
\newtheorem{definition}{Definition}
\newtheorem{remark}{Remark}
\newtheorem{corollary}{Corollary}
\newtheorem{theorem}{Theorem}
\title{Approximation Bounds for Model Reduction on Polynomially Mapped Manifolds}
\author{%
Patrick Buchfink\thanks{%
Institute of Applied Analysis and Numerical Simulation, University of Stuttgart, Pfaffenwaldring 57, 70569 Stuttgart, Germany.
(\url{patrick.buchfink,haasdonk@mathematik.uni-stuttgart.de})}
\and
Silke Glas\thanks{%
University of Twente, Department of Applied Mathematics, P.O. Box 217, 7500 AE Enschede, The Netherlands.
(\url{s.m.glas@utwente.nl})}
\and
Bernard Haasdonk\footnotemark[1]}
\begin{document}

\maketitle

\begin{abstract}
	\textbf{Abstract} For projection-based linear-subspace model order reduction (MOR), it is well known that the Kolmogorov $n$-width describes the best-possible error for a reduced order model (ROM) of size $n$. In this paper, we provide approximation bounds for ROMs on polynomially mapped manifolds. In particular, we show that the approximation bounds depend on the polynomial degree $p$ of the mapping function as well as on the linear Kolmogorov $n$-width for the underlying problem. This results in a Kolmogorov $(n,p)$-width, which describes a lower bound for the best-possible error for a ROM on polynomially mapped manifolds of polynomial degree $p$ and reduced size $n$.
\end{abstract}

\keywords{model order reduction, approximation bounds, polynomial mappings, Kolmogorov $n$-widths}

\msc{41A45, 41A46, 41A65, 65L70, 65M15, 65N15}

\newcommand{\sol}{x}
\newcommand{\solMOR}{\widetilde{x}}

\section{Introduction}

Model Order Reduction (MOR) is used to derive surrogate models for high-dimensional \emph{full-order models} (FOMs).
This allows to execute and speed up tasks which require to evaluate the FOM multiple times for different parameters (e.g.\ in parameter studies, sampling-based uncertainty quantification) or in real time (e.g.\ in model-based control).
We denote the (possibly parametric) FOM with $P(\bfmu)$ in dependence of an arbitrary but fixed \emph{parameter vector} $\bfmu \in \P$ from a given \emph{parameter domain} $\P \subset \R^{\nmu}, \nmu \in \N$.
Typically, the FOM is a parametric system of partial or ordinary differential equations and it is formulated on an $N$-dimensional Banach space $(\linSpace, \norm{\cdot})$ over a field $\field$, ($\field = \R$ or $\field = \mathbb{C}$), with large dimension $N \in \N \cup \{ \infty \}$.
The goal of MOR is to approximate the so-called solution manifold
\begin{align} \label{eq:parametric_solution_manifold}
	\solmnf := \left\{ \sol(\bfmu) \in \linSpace : x(\bfmu) \textnormal{ is solution of } P(\bfmu) \textnormal{ for parameter vector } \bfmu \in \P \right\} \subset \linSpace.
\end{align}
To this end, classical MOR determines a low-dimensional subspace $\linSpace_n \subset \linSpace$ with $\dim(\linSpace_n) = n \ll N$ and an efficiently computable \emph{reduced-order model (ROM)} $P_n(\bfmu)$.
The ROM is used to compute a reduced solution to approximate the FOM solution $\sol(\bfmu) \in \solmnf$.
The quality of this approximation can be bounded from below by the \emph{Kolmogorov $n$-widths}.
These quantify how well a given subset $\subSet \subset \linSpace$ can be approximated by an $n$-dimensional linear subspace of $\linSpace$ with
\begin{align*}
	d_n(\subSet;\linSpace) := \inf_{\substack{\linSpaceAlt_n \subset \linSpace,\\ \textnormal{dim}(\linSpaceAlt_n) = n}} \sup_{x \in \subSet} \inf_{x_n \in \linSpaceAlt_n} \| x-x_n\|_\linSpace.
\end{align*}
Since classical MOR relies on the approximation in a linear subspace of dimension $n$, it is well-known that the best-possible approximation of all elements in the solution manifold $\solmnf$ is bounded from below by $d_n(\solmnf;\linSpace)$.

For some problem classes, there have been analytical results for the behavior of the Kolmogorov $n$-widths for increasing $n$. E.g.\ for linear coercive elliptic PDEs
(a) with one parameter, it has been shown that $d_n(\solmnf;\linSpace) $ decays at least exponentially, i.e., $d_n(\solmnf;\linSpace)  \le C\exp^{-\gamma n}$ for $C,\gamma >0$, see \cite{Maday2002,Maday2002a}, or (b) with $d \in \N$ parameters,
\cite{ohlberger2016reduced,bachmayr2017kolmogorov} prove for affinely decomposable problems that the decay is at least $d_n(\solmnf; \linSpace) \le C \exp^{-c n^{\gamma}}$ for $C,c,\gamma > 0$.
For linear transport equations or linear wave equations, it is known that $d_n(\solmnf;\linSpace)$ can exhibit slow decays with a rate of at most $n^{-1/2}$, see \cite{greif2019decay,ohlberger2016reduced}. Especially for the latter case, there have been various attempts to ``break'' the Kolmogorov $n$-width barrier by, e.g., considering model reduction on manifolds.

The object of interest in the Kolmogorov $n$-width are the linear subspaces $\linSpace_n \subset \linSpace$.
These can be characterized by a basis $\{ v_i \}_{i=1}^n \subset \linSpace_n$ via linear combination of basis vectors $v_i \in \linSpace_n$ with basis coefficients $\xr_i \in \field$.
In MOR, the basis coefficients $\bfxr := (\xr_i)_{i=1}^n \in \field^n$ are referred to as \emph{reduced coordinates} and are determined by solving the according ROM.
In this paper, we are looking at Kolmogorov $n$-widths for a special type of submanifold for which the basis coefficients $\xr_i$ are obtained from a polynomial of degree $p \in \N_0$.
We refer to these submanifolds as \emph{polynomially mapped}.
Moreover, we introduce an analogue to the Kolmogorov $n$-width,
which we refer to as \emph{polynomial Kolmogorov $(n,p)$-width} which additionally depends on the order $p$ of the polynomial. For an overview of different versions of nonlinear widths, we refer to \cite{Cohen2023}. Additionally, we relate our polynomial Kolmogorov $(n,p)$-width to a polynomial analogue of the nonlinear manifold width, first introduced in \cite{devore1989optimal}, at the end of this paper. 

Note, that in previous works, polynomial approximations are ubiquitous in the field of numerical approximation of high-dimensional problems. For example in the work \cite{cohen2011analytic}, parametric PDEs are approximated with a polynomial function. However, this approach is polynomial in the parameter vector $\bfmu$ while our approximation is polynomial in the reduced coordinates which may depend arbitrarily complex on the parameter vector. It has been shown in \cite{bachmayr2017kolmogorov} for classical MOR (i.e. $p$=1 in our case) that such methods can perform significantly better than the methods from \cite{cohen2011analytic}.

Moreover, there have been recent approaches using polynomial approximation for the reduced coordinates. 
In the work of \cite{gu2011model}, the general idea of using nonlinear mapping functions and especially quadratic mapping functions ($p=2$) has been described, but the model reduction is only performed with piece-wise linear manifolds.
Recently, the idea of model reduction with quadratically embedded manifolds has been introduced in \cite{jain2017quadratic, rutzmoser2017generalization} for structural nonlinear dynamics.
In these references, the approaches are introduced for a special class of second-order dynamical systems by e.g., using a linearized problem to compute vibration modes as linear part of the basis and then constructing the quadratic extension via modal derivatives. A different possibility to derive the modal derivatives in the context of model reduction on quadratically embedded manifolds has been derived in \cite{cruz2020}. In \cite{BARNETT2022111348} the authors perform projection based model reduction on quadratically embedded manifolds, but their approach holds for more general settings, e.g., first-order equations and transport-dominated flow problems. The same idea of using a quadratic mapping function has been used in \cite{GEELEN2023115717}, but there the authors perform operator inference, i.e., the ROM is learned from data instead of projecting the FOM. In \cite{ISSAN2023111689}, a shifted operator inference using quadratic mapping functions has been used to predict solar winds. In \cite{BennerHeilandDuffPawan2022}, also operator interference is used, but the approach utilizes a state-dependent mass matrix that depends on the derivative of the mapping function.
A structure-preserving technique for Hamiltonian systems has been introduced in \cite{Sharma2023}.

In the present work, we are particularly interested in what is the best-possible approximation error of a solution manifold by a polynomially mapped manifold. We will derive an upper as well as a lower bound for this approximation error, with both bounds being related to the classical Kolmogorov $n$-width of the problem at hand.

\section{Approximation Bounds on Polynomial Manifolds}

In this section, we start by defining polynomial mappings and polynomially mapped manifolds. Then, we show that these polynomially mapped manifolds are contained in a linear subspace, which leads to a lower and an upper bound for the polynomial Kolmogorov $(n,p)$-widths of these manifolds. We close this section by stating how this impacts certain decay rates for which a classical Kolmogorov $n$-width is known. 

\subsection{Polynomial Mappings}
Consider a vector space $\linSpace$ over a field $\field$, ($\field = \R$ or $\field = \mathbb{C}$).
The \emph{Kronecker product (of order $k \geq 0$)} of a vector $\bfxr = (\xr_i)_{i=1}^n \in \field^n$ is denoted with
\begin{align*}
\kkron{\bfxr} :=&
\Big(
\underbrace{
	\xr_1
	\cdots
	\xr_1
}_{k \text{ terms}},\;
\underbrace{
	\xr_1
	\cdots
	\xr_1
	\cdot
	\xr_2
}_{k \text{ terms}},\;
\dots,\;
\underbrace{
	\xr_n
	\cdots
	\xr_n
}_{k \text{ terms}}
\Big)
\in \field^{n^k}
\qquad \text{for $k\geq1$},&
	\kkron[0]{\bfxr} := 1 \in \field.
\end{align*}
Due to the commutativity of the multiplication of elements in $\field$,
the Kronecker product contains redundant entries.
Thus, we consider the \emph{symmetric Kronecker product} $\skkron{\bfxr} \in \field^{m(n,k)}$ which neglects duplicate terms in $\kkron{\bfxr}$ and results in
$$m(n,k):=\binom{n+k-1}{k} \leq n^k$$
entries.

For $n \in \N$, $p \in \N_0$ and a given set of vectors
$\vectorSet{n}{p} := \left\{ v_{kj} \in \linSpace \,\big|\, 0 \leq k \leq p, \, 1 \leq j \leq m(n, k) \right\} \subset \linSpace$,
we consider \emph{polynomial mappings with degree $p\ge 0$}
\begin{align*}
	\dec[n]{p}: \field^n \to \linSpace,\quad
	\bfxr \mapsto
\sum_{k=0}^p \sum_{j=1}^{m(n, k)} \left( \skkron{\bfxr} \right)_j v_{kj}
\end{align*}
which sums over all symmetric Kronecker products from order $0$ to $p$.
Following the notation in MOR,
we call $\bfxr \in \field^n$ the \emph{reduced coordinates}.
In total,
\begin{align*}
	\numTotal{n}{p} := \sum_{k=0}^{p} m(n,k)
\end{align*}
vectors are used in the polynomial mapping.
The image of a polynomial mapping of order $p$ defines an at most $n$-dimensional submanifold of $\linSpace$
which we call \emph{polynomially mapped submanifold} and denote in the following as 
\begin{align*}
	\submnf[n]{p} := \img(\dec[n]{p}) \subset \linSpace.
\end{align*}

We can then immediately show the following lemma, which will be needed later to derive the approximation bounds. 
\begin{lemma}[Intermediate Linear Mapping]\label{lem:lin_embed}
	The image of a polynomial mapping of order $p$ is contained in a $\numTotal{n}{p}$-dimensional subspace of $\linSpace$.
\end{lemma}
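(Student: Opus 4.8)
The plan is to read off the bound directly from the definition of the polynomial mapping: every point in the image $\submnf[n]{p}$ is, by construction, a $\field$-linear combination of the \emph{fixed} finite collection of vectors $\vectorSet{n}{p}$, so the image is trapped in their span. Concretely, for arbitrary $\bfxr \in \field^n$ the value
$$
	\dec[n]{p}(\bfxr) = \sum_{k=0}^p \sum_{j=1}^{m(n,k)} \left( \skkron{\bfxr} \right)_j v_{kj}
$$
is a linear combination of the vectors $v_{kj} \in \vectorSet{n}{p}$ with scalar coefficients $\left( \skkron{\bfxr} \right)_j \in \field$. Hence $\dec[n]{p}(\bfxr) \in \mathrm{span}\left( \vectorSet{n}{p} \right)$ for every $\bfxr$, and therefore $\submnf[n]{p} = \img\left( \dec[n]{p} \right) \subseteq \mathrm{span}\left( \vectorSet{n}{p} \right)$.

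Next I would count the spanning vectors. The set $\vectorSet{n}{p}$ is indexed by pairs $(k,j)$ with $0 \le k \le p$ and $1 \le j \le m(n,k)$, so it contains exactly $\sum_{k=0}^{p} m(n,k) = \numTotal{n}{p}$ elements, whence $\dim\left( \mathrm{span}\left( \vectorSet{n}{p} \right) \right) \le \numTotal{n}{p}$. Since the statement asks for a subspace of dimension \emph{exactly} $\numTotal{n}{p}$ rather than an upper bound, if this span happens to be of strictly smaller dimension I would enlarge it by adjoining further basis vectors of $\linSpace$ until dimension $\numTotal{n}{p}$ is attained; this is possible in the regime $N = \dim(\linSpace) \ge \numTotal{n}{p}$ relevant to model reduction, and the enlarged subspace still contains $\submnf[n]{p}$.

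I do not expect a genuine obstacle here: the result is essentially a restatement of the definition of $\dec[n]{p}$ together with the elementary bookkeeping $\numTotal{n}{p} = \sum_{k=0}^{p} m(n,k)$. The only point deserving minor care is the exact-dimension phrasing, which the trivial enlargement step above resolves.
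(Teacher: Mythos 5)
Your proposal is correct and is essentially the paper's own argument: the paper phrases it as factoring $\dec[n]{p} = \mapLin{n}{p} \circ \mapKron{n}{p}$ through $\field^{\numTotal{n}{p}}$ and taking $\subspace{n}{p} = \img(\mapLin{n}{p})$, which is exactly your $\mathrm{span}(\vectorSet{n}{p})$. Your extra remark about the span possibly having strictly smaller dimension (and enlarging it, or reading the claim as ``at most $\numTotal{n}{p}$-dimensional'') is a fair point of care that the paper glosses over but which is harmless for the subsequent bounds.
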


\begin{proof}
	Firstly, we rewrite the polynomial mapping as the composition $\dec[n]{p} := \mapLin{n}{p} \circ \mapKron{n}{p}$ of a nonlinear map
	\begin{align*}
		\mapKron{n}{p}: \field^n \to \field^{\numTotal{n}{p}},
\quad
\bfxr \mapsto (\skkron{\bfxr}, \skkron[k-1]{\bfxr}, \dots, \skkron[2]{\bfxr}, \bfxr, 1)
	\end{align*}
	which generates all symmetric Kronecker products and a linear map
	\begin{align*}
		\mapLin{n}{p}: \field^{\numTotal{n}{p}} \to \linSpace,\;
		\bfy \mapsto \sum_{v_i \in \vectorSet{n}{p}} (\bfy)_i v_i.
	\end{align*}
	But then, the submanifold $\submnf[n]{p} = \img(\dec[n]{p}) \subset \img(\mapLin{n}{p}) =: \subspace{n}{p}$ is a subset of the $\numTotal{n}{p}$-dimensional subspace $\subspace{n}{p}$.
\end{proof}

\begin{example}[Finite-Dimensional Vector Spaces]
	In the case of $\linSpace = \field^N$ for some $N \in \N$,
	the vectors $v_{kj} \in \vectorSet{n}{p}$ for the polynomial mapping may simply be given by the columns of $(p+1)$ \emph{mapping matrices} $\kTensor := [v_{k\, 1}, \dots, v_{k\, m(n,k)}] \in \field^{N \times m(n,k)}$, $0 \leq k \leq p$, such that
	\begin{align*}
		\dec[n]{p}(\bfxr) = \sum_{k=0}^{p}  \kTensor \skkron{\bfxr},
	\end{align*}
	In this case, the linear map from \cref{lem:lin_embed} stacks all $0\leq k \leq p$ mapping matrices $\kTensor$ in its columns
	\begin{align*}
		&\mapLin{n}{p}(\bfy) = \bfA \bfy,&
&\bfA := \left[ \kTensor[p],\, \dots,\, \kTensor[0]\right] \in \field^{N \times \numTotal{n}{p}}&
&\text{such that}&
&\dec[n]{p}(\bfxr) = \bfA \mapKron{n}{p}(\bfxr).
	\end{align*}
	Thus, the definition of a polynomial mapping covers
	\begin{enumerate}
		\item classical MOR on affine or linear subspaces with
		$\dec[n]{1}(\bfxr) = \kTensor[1] \bfxr + \kTensor[0],$
		\item MOR on quadratic manifolds, as e.g.\ in \cite{ISSAN2023111689,BARNETT2022111348,GEELEN2023115717,BennerHeilandDuffPawan2022}, with
		$\dec[n]{2}(\bfxr) = \kTensor[2] \skkron[2]{\bfxr} + \kTensor[1] \bfxr + \kTensor[0].$
	\end{enumerate}
\end{example}

\subsection{Bounds for the Approximation Error using Polynomial Mapping}

We start by recalling the classical Kolmogorov $n$-width
and transfer its concept to polynomially mapped manifolds.
Afterwards, we show that the polynomial Kolmogorov $(n,p)$-width
can be bounded from below and above by classical Kolmogorov widths.

\begin{definition}[Worst Best-approximation Error]
	Let $(\linSpace, \norm{\cdot})$ be a normed vector space.
	For two sets $\subSet,T \subseteq \linSpace$,
	we call
	\begin{align*}
        \dist(\subSet, T) := \sup_{s \in \subSet}\ \inf_{t \in T} \norm{s - t}
	\end{align*}
	the \emph{worst best-approximation error of $\subSet$ in $T$}.
\end{definition}
Next, we are interested in how well a subset $\subSet\subseteq \linSpace$ can be approximated by an $n$-dimensional linear subspace of $\linSpace$. This measure is known as the Kolmogorov $n$-width and the idea was first formulated in \cite{Kolmogorov1936}, although we refer here to a later definition. 
\begin{definition}[(Classical) Kolmogorov $n$-width {\cite[Chapter II, Definition 1.1]{Pinkus1985}}]
    Let $(\linSpace, \norm{\cdot})$ be a normed vector space
    and let $\subSet \subset \linSpace$ be a subset.
    Then, the \emph{Kolmogorov $n$-width}
    \begin{align}\label{Eq:kolWidth}
        \kolWidth[n]{\subSet}{\linSpace}
:= \inf_{\substack{
    \cU \subseteq \linSpace \text{ subspace}\\
    \dim(\cU) \leq n
}}\,
        \dist(\subSet, \cU),
    \end{align}
    measures the theoretically optimal worst best-approximation error of $\subSet$ achievable by some at most $n$-dimensional subspace $\cU$ of $\linSpace$. If $ \kolWidth[n]{\subSet}{\linSpace} =    \dist(\subSet, \cU),$ for some subspace $\cU$ of dimension at most $n$, then $\cU$ is said to be an \emph{optimal subspace} for $ \kolWidth[n]{\subSet}{\linSpace}$. 
\end{definition}

The Kolmogorov $n$-widths are an established measure to argue how well the solution manifold $\solmnf \subset \linSpace$ from \eqref{eq:parametric_solution_manifold} can be approximated by classical MOR. Since the approximation of the ROM in classical MOR is determined in an $n$-dimensional subspace $\linSpace_n \subset \linSpace$, the best-possible approximation is limited from below by the Kolmogorov $n$-width by construction, i.e.\
\begin{align}\label{eq:relation_kolmogorov_mor}
	\dist(\solmnf, \linSpace_n)
\geq \kolWidth[n]{\solmnf}{\linSpace}.
\end{align}

In the following, we generalize the classical Kolmogorov $n$-width to the \emph{polynomial Kolmogorov $(n,p)$-width}, i.e., we are interested in the best-possible approximation error for a ROM constructed by a polynomial mapping. 
\begin{definition}[Polynomial Kolmogorov $(n,p)$-width]
	Consider a normed vector space $(\linSpace, \norm{\cdot})$ with a subset $\subSet \subseteq \linSpace$.
	Then, the \emph{polynomial Kolmogorov $(n,p)$-width}
    \begin{align*}
    	\polyKolWidthOne[n]{p}{\subSet}{\linSpace}
:= \inf_{\substack{
    \submnf[l]{p} \mathrm{ poly.~mapped~submnf.}\\
    \dim(\submnf[l]{p}) \leq n
}}\,
        \dist(\subSet, \submnf[l]{p})
	\end{align*}
    measures the theoretically optimal worst best-approximation error of $\subSet$ achievable by some polynomially mapped submanifold $\submnf[l]{p}$ of  $\linSpace$ with dimension $\dim(\submnf[l]{p}) \leq l \leq n$.
\end{definition}

This allows us to bound the best-approximation error for ROMs from MOR with polynomially mapped manifolds of order $p$ and reduced dimension $n$ from below with the polynomial Kolmogorov $(n,p)$-width analogously to \eqref{eq:relation_kolmogorov_mor} by
$$\dist(\solmnf, \submnf[n]{p})
\geq \polyKolWidthOne[n]{p}{\solmnf}{\submnf[n]{p}}.$$

We can show, that the polynomial Kolmogorov $(n,p)$-width can be bounded from above and below with quantities relating to the classical Kolmogorov $n$-width. 

\begin{theorem}[Approximation Bounds for Polynomial Kolmogorov $(n,p)$-width]\label{thm:bounds}
	Consider a normed vector space $(\linSpace, \norm{\cdot})$.
	For any set $\subSet \subseteq \linSpace$ 
	the polynomial Kolmogorov $(n,p)$-width for $p \geq 1$ is sandwiched by the classical Kolmogorov $\numTotal{n}{p}$-width and $n$-width, i.e., 
	\begin{align}\label{Eq:bounds}
		\kolWidth[\numTotal{n}{p}]{\subSet}{\linSpace} \leq\polyKolWidthOne[n]{p}{\subSet}{\linSpace} \leq \kolWidth[n]{\subSet}{\linSpace}.
	\end{align}
\end{theorem}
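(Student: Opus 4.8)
\section*{Proof proposal}

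The plan is to prove the two inequalities in \eqref{Eq:bounds} separately, since the upper and lower bounds rest on complementary observations.

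For the upper bound $\polyKolWidthOne[n]{p}{\subSet}{\linSpace} \leq \kolWidth[n]{\subSet}{\linSpace}$, the idea is that every linear subspace of dimension at most $n$ is itself the image of a degree-$p$ polynomial mapping, so the competitors for $\kolWidth[n]{\cdot}{\cdot}$ form a subfamily of those for $\polyKolWidthOne[n]{p}{\cdot}{\cdot}$. Concretely, given any subspace $\cU \subseteq \linSpace$ with $m := \dim(\cU) \leq n$ and a basis $\{v_1, \dots, v_m\}$, I would build the polynomial mapping $\dec[m]{p}$ whose degree-one vectors are $v_{1j} := v_j$ for $1 \leq j \leq m(m,1) = m$ and whose vectors of degree $0$ and of degree $\geq 2$ all vanish. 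Since $p \geq 1$ the degree-one block is available and $\skkron[1]{\bfxr} = \bfxr$, so that $\dec[m]{p}(\bfxr) = \sum_{j=1}^m \xr_j v_j$ and hence $\img(\dec[m]{p}) = \cU$. This is an admissible competitor of manifold dimension $m \leq n$, giving $\dist(\subSet, \submnf[m]{p}) = \dist(\subSet, \cU)$; taking the infimum over all such $\cU$ yields the bound. I expect no genuine difficulty here beyond noting that $p \geq 1$ is precisely what makes the degree-one term usable.

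For the lower bound $\kolWidth[\numTotal{n}{p}]{\subSet}{\linSpace} \leq \polyKolWidthOne[n]{p}{\subSet}{\linSpace}$, I would lean on \cref{lem:lin_embed}. Fix any feasible polynomially mapped submanifold $\submnf[l]{p}$ with $l \leq n$. Applying the lemma with $l$ in place of $n$, it is contained in the $\numTotal{l}{p}$-dimensional subspace $\subspace{l}{p} = \img(\mapLin{l}{p})$. Since $m(l,k) = \binom{l+k-1}{k}$ is nondecreasing in its first argument, $l \leq n$ implies $\numTotal{l}{p} \leq \numTotal{n}{p}$, so $\subspace{l}{p}$ is an admissible competitor for the classical width $\kolWidth[\numTotal{n}{p}]{\cdot}{\cdot}$. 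The inclusion $\submnf[l]{p} \subseteq \subspace{l}{p}$ enlarges the set over which the inner infimum is taken, so $\inf_{t \in \subspace{l}{p}} \norm{s-t} \leq \inf_{t \in \submnf[l]{p}} \norm{s-t}$ for every $s \in \subSet$, whence $\dist(\subSet, \subspace{l}{p}) \leq \dist(\subSet, \submnf[l]{p})$. Chaining $\kolWidth[\numTotal{n}{p}]{\subSet}{\linSpace} \leq \dist(\subSet, \subspace{l}{p})$ with this inequality and taking the infimum over all feasible $\submnf[l]{p}$ closes the bound.

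The only place that calls for care — more bookkeeping than obstacle — is keeping the two dimensional parameters apart: the reduced dimension $l \leq n$ of the manifold, which governs admissibility in $\polyKolWidthOne[n]{p}{\cdot}{\cdot}$, versus the ambient linear dimension $\numTotal{l}{p} \leq \numTotal{n}{p}$ of its enclosing subspace, which governs admissibility in $\kolWidth[\numTotal{n}{p}]{\cdot}{\cdot}$. Once the monotonicity $\numTotal{l}{p} \leq \numTotal{n}{p}$ is in hand, both inequalities reduce to the elementary fact that an infimum over a larger set can only be smaller.
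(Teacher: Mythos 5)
Your proposal is correct and follows essentially the same route as the paper: the upper bound by realizing any subspace of dimension at most $n$ as the image of a degree-$p$ polynomial mapping whose non-linear blocks vanish (using $p\geq 1$), and the lower bound via \cref{lem:lin_embed} together with the monotonicity of $\numTotal{l}{p}$ in $l$. The only cosmetic difference is that you treat $\subspace{l}{p}$ directly as an admissible competitor for the $\numTotal{n}{p}$-width, whereas the paper passes through the intermediate width $\kolWidth[\numTotal{l}{p}]{\subSet}{\linSpace}$ and invokes its monotonic decrease in the dimension parameter; the two phrasings are equivalent.
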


\begin{proof}
	We start by proving the upper bound $\polyKolWidthOne[n]{p}{\subSet}{\linSpace} \leq \kolWidth[n]{\subSet}{\linSpace}$: As $p \geq 1$, linear mappings are included in $\dec[n]{p}$ as a special case by setting all vectors $v_{kj} \in \vectorSet{n}{p}$ with $k \neq 1$ to zero.
	Thus, the polynomially mapped submanifolds $\submnf[l]{p}$ in the polynomial Kolmogorov $(n,p)$-width include all linear subspaces with dimension $l \leq n$ as special case
	and the polynomial Kolmogorov $(n,p)$-width is bounded by the classical Kolmogorov $n$-width from above.

	We continue by deriving the lower bound $\kolWidth[\numTotal{n}{p}]{\subSet}{\linSpace} \leq \polyKolWidthOne[n]{p}{\subSet}{\linSpace}$: From \cref{lem:lin_embed}, we know that $\img(\dec[l]{p}) \subset \subspace{l}{p}$ with $\dim(\subspace{l}{p}) = \numTotal{l}{p}$. Then, for any set $\subSet \subseteq \linSpace$, it holds that
	\begin{align*}
		\dist(\subSet, \submnf[l]{p})
\geq \dist(\subSet, \subspace{l}{p})
\stackrel{\eqref{Eq:kolWidth}}{\geq} \kolWidth[\numTotal{l}{p}]{\subSet}{\linSpace}
\stackrel{l \leq n}{\geq} \kolWidth[\numTotal{n}{p}]{\subSet}{\linSpace}
	\end{align*}
	since $\kolWidth[n]{\subSet}{\linSpace}$ is monotonically decreasing in $n$ and $\numTotal{l}{p}$ is monotonically increasing in $l$. By taking the infimum over $\submnf[l]{p}$ in the left-hand side of the inequality, we arrive at the lower bound.
\end{proof}

\begin{figure}
	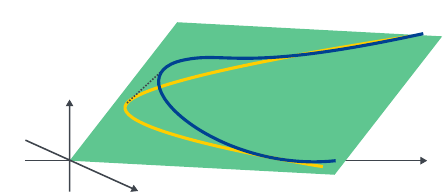
	\centering
	\caption{Schematic illustration of the main assertion of the paper for $n=1$, $p=2$ and ${\nmu}=1$.
	Approximation of the solution manifold $\solmnf$ (solid blue line) with a polynomially mapped submanifold $\submnf[n]{p}$ (solid yellow line) which is contained in the subspace $\subspace{n}{p}$ (green surface). To this end, for a fixed parameter $\mu \in \P$, the approximation of $\bfx(\bfmu)$ by $\submnf[n]{p}$ cannot be better as the approximation of $\bfx(\bfmu)$ by $\subspace{n}{p}$.
	}
		\label{fig:polyKolWidth}
\end{figure}

Especially critical in the statement of \cref{thm:bounds} is the lower bound.
It states that the polynomial mappings are limited by a classical Kolmogorov $n$-width, which results from \cref{lem:lin_embed}: we know that the image of every polynomial of order $p$ is contained in a $ \numTotal{n}{p}$-dimensional subspace of $\linSpace$. This is visualized schematically in \cref{fig:polyKolWidth}, where the solution manifold $\solmnf$ is approximated by the polynomially mapped manifold $\submnf[n]{p}$, which is in turn embedded in the linear subspace $\subspace{n}{p}$. If for a given parameter vector $\bfmu \in \P$, we consider the distance from $\bfx(\bfmu) \in \solmnf$ to $\submnf[n]{p}$, then this particular distance cannot be less than the orthogonal projection of $\bfx(\bfmu) $ onto $\subspace{n}{p}$.

In the following, we show how this impacts certain decay rates in the classical Kolmogorov $n$-widths.

\begin{corollary}\label{corollary:decay_rates}
If the decay of the Kolmogorov $n$-widths is at most algebraic
or exponential
\begin{align}\label{eq:kol_width_decay_rates}
	&\kolWidth[n]{\subSet}{\linSpace} \geq M n^{-\alpha},&
	&\text{or}&
	&\kolWidth[n]{\subSet}{\linSpace} \geq M e^{-a n^\alpha},
\end{align}
for some $M,\alpha,a > 0$, then for $p \ge 2, n \ge 4,$ the decay of the polynomial Kolmogorov $(n,p)$-width is also at most algebraic respective exponential with 
\begin{align*}
	&\polyKolWidthOne[n]{p}{\subSet}{\linSpace} \geq M n^{-\alpha p},&
	&\text{or}&
	&\polyKolWidthOne[n]{p}{\subSet}{\linSpace} \geq M e^{-a n^{\alpha p}}.
\end{align*}
\end{corollary}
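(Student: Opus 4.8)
The plan is to reduce both assertions to the lower bound of \cref{thm:bounds} combined with a single combinatorial inequality. By \cref{thm:bounds} we have $\polyKolWidthOne[n]{p}{\subSet}{\linSpace} \geq \kolWidth[\numTotal{n}{p}]{\subSet}{\linSpace}$, so it suffices to bound the classical width at index $\numTotal{n}{p}$ from below. In the algebraic case, evaluating the hypothesis at index $\numTotal{n}{p}$ gives $\kolWidth[\numTotal{n}{p}]{\subSet}{\linSpace} \geq M (\numTotal{n}{p})^{-\alpha}$; since $x \mapsto x^{-\alpha}$ is decreasing for $\alpha > 0$, the claimed bound $M n^{-\alpha p}$ follows precisely when $\numTotal{n}{p} \leq n^p$. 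In the exponential case, the same substitution yields $\kolWidth[\numTotal{n}{p}]{\subSet}{\linSpace} \geq M e^{-a (\numTotal{n}{p})^{\alpha}}$, and because $t \mapsto e^{-a t^{\alpha}}$ is decreasing, again exactly $\numTotal{n}{p} \leq n^p$ delivers $M e^{-a n^{\alpha p}}$. Hence everything reduces to the single estimate $\numTotal{n}{p} \leq n^p$.

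To prove this estimate, I would first put $\numTotal{n}{p}$ in closed form. Using $m(n,k) = \binom{n+k-1}{k}$ and the hockey-stick identity $\sum_{k=0}^{p} \binom{n+k-1}{k} = \binom{n+p}{p}$, one obtains $\numTotal{n}{p} = \binom{n+p}{p}$, so it remains to show $\binom{n+p}{p} \leq n^p$ for $p \geq 2$ and $n \geq 4$. Writing $\binom{n+p}{p} = \prod_{k=1}^{p} \frac{n+k}{k}$ and dividing by $n^p$, the inequality is equivalent to $\prod_{k=1}^{p} (1 + k/n) \leq p!$. Estimating each factor with $n \geq 4$ gives $\prod_{k=1}^{p}(1 + k/n) \leq \prod_{k=1}^{p} \frac{4+k}{4} = \frac{(p+4)!}{24 \cdot 4^p}$, so it is enough to check $(p+1)(p+2)(p+3)(p+4) \leq 24 \cdot 4^p$. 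This holds at $p = 2$ (namely $360 \leq 384$) and propagates by induction, since on passing from $p$ to $p+1$ the right-hand side gains a factor $4$ while the left-hand side gains only $\frac{p+5}{p+1} < 4$.

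The main obstacle is that $\numTotal{n}{p} \leq n^p$ is a tight inequality that fails outside the stated range: it breaks at $n = 3, p = 2$ (where $\binom{5}{2} = 10 > 9$) and at $p = 1$ (where $\numTotal{n}{1} = n+1 > n$). Consequently the argument must be quantitative rather than asymptotic, and the hypotheses $p \geq 2$, $n \geq 4$ are genuinely needed; the boundary case $n = 4, p = 2$ (with $15 \leq 16$) is essentially the worst one. Recognizing the hockey-stick collapse of $\numTotal{n}{p}$ to $\binom{n+p}{p}$ is the step that makes the comparison with $n^p$ manageable; without it, comparing the raw sum $\sum_{k} \binom{n+k-1}{k}$ to $n^p$ would be considerably more cumbersome.
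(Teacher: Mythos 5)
Your proof is correct and follows essentially the same route as the paper: both reduce the corollary to the lower bound of \cref{thm:bounds} together with the estimate $\numTotal{n}{p} = \binom{n+p}{p} \le n^p$ for $p \ge 2$, $n \ge 4$, and then conclude via monotonicity of $x \mapsto x^{-\alpha}$ and $x \mapsto e^{-a x^{\alpha}}$. The only difference lies in how that combinatorial estimate is verified: the paper's appendix inducts on $p$ with $n$ kept free (base case $p=2$ via $\tfrac{1}{2}n^2 - \tfrac{3}{2}n - 1 \ge 0$, induction step via $\tfrac{n+p+1}{p+1} \le n$), whereas you first eliminate $n$ by bounding $1 + k/n \le (4+k)/4$ and then induct on the resulting one-variable inequality $(p+1)(p+2)(p+3)(p+4) \le 24 \cdot 4^p$ --- both arguments are valid, elementary, and of comparable length.
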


\begin{proof}
	We can estimate the total number of vectors used in the polynomial mapping for $p \geq 2$ and $n$ big enough with
	\begin{align}\label{Eq:bound_numTotal}
		\numTotal{n}{p} \leq n^p.
	\end{align}
	For the rigorous proof of this part, we refer due to length to the \cref{appendix}.
	With \cref{thm:bounds} we derive for the
	algebraic case
	\begin{align*}
		\polyKolWidthOne[n]{p}{\subSet}{\linSpace}
	\stackrel{\eqref{Eq:bounds}}{\geq} \kolWidth[\numTotal{n}{p}]{\subSet}{\linSpace}
	\stackrel{\eqref{eq:kol_width_decay_rates}}{\geq} M \left(\numTotal{n}{p} \right)^{-\alpha}
	\stackrel{\eqref{Eq:bound_numTotal}}{\geq} M n^{-\alpha p}
	\end{align*}
	and for the exponential case
	\begin{align*}
		\polyKolWidthOne[n]{p}{\subSet}{\linSpace}
	\stackrel{\eqref{Eq:bounds}}{\geq} \kolWidth[\numTotal{n}{p}]{\subSet}{\linSpace}
	\stackrel{\eqref{eq:kol_width_decay_rates}}{\geq} M e^{-a \left(\numTotal{n}{p} \right)^{\alpha}}
	\stackrel{\eqref{Eq:bound_numTotal}}{\geq}
M e^{-a  n^{\alpha p}}
	\end{align*}
	since both, $(\cdot)^{-\alpha}$ and $e^{-\alpha (\cdot)^{\alpha}}$, are monotonically decreasing.
\end{proof}

For MOR, this theorem means that, if bounds on the classical Kolmogorov $n$-widths $\kolWidth[n]{\solmnf}{\linSpace}$ are known,
then the best-possible approximation error of ROMs based on polynomially mapped manifolds $\polyKolWidthOne[n]{p}{\solmnf}{\linSpace}$ decays within the same type of convergence class.
This is exemplified in the following.

\begin{example}[Linear Advection \cite{ohlberger2016reduced}, Linear Wave Equation \cite{greif2019decay}]
	Two classical results for a provable lower bound of the Kolmogorov $n$-widths are the linear advection model from \cite{ohlberger2016reduced} and the linear wave equation from \cite{greif2019decay}.
	These papers prove that the decay rate of the Kolmogorov $n$-widths of the snapshot set is bounded from below by $1/2 \cdot n^{-1/2}$ for the linear advection problem and, respectively, $1/4 \cdot n^{-1/2}$ for the linear wave equation.
	With \cref{corollary:decay_rates}, we can see that the corresponding polynomial Kolmogorov $(n,p)$-widths of the snapshot set will be limited by
	$$\polyKolWidthOne[n]{p}{\solmnf}{\linSpace} \geq 1/2 \cdot n^{-p/2} \quad \text{and} \quad \polyKolWidthOne[n]{p}{\solmnf}{\linSpace} \geq 1/4 \cdot n^{-p/2},$$
	respectively.
\end{example}

In \cite{Cohen2023}, an alternative formulation of nonlinear widths is introduced, the so-called \emph{manifold widths}. The following definition transfers this concept to our case of polynomially mapped manifolds.

\begin{definition}[Polynomial Manifold $(n,p)$-width]
	Consider a normed vector space $(\linSpace, \norm{\cdot})$ with a subset $\subSet \subset \linSpace$.
	Then, the \emph{polynomial manifold $(n,p)$-width}
	\begin{align*}
		\polyManiWidth[n]{p}{\subSet}{\linSpace}
:= \inf_{\substack{
	\dec[l]{p} \text{ poly.~map.}\\
	l \leq n
}}\;
\inf_{\substack{
	e \in C^0(\linSpace, \field^l)
}}\,
\sup_{s \in \subSet}
		\norm{s - \dec[n]{p}(e(s))}
	\end{align*}
	minimizes the maximum distortion of the encoding procedure of $\subSet$ over the set of all polynomial mappings $\dec[l]{p}$ and some continuous \emph{encoder} $e \in C^0(\linSpace, \field^l)$ with reduced dimension $l \leq n$.
\end{definition}

Indeed, we can show directly from \Cref{thm:bounds} that the lower bound transfers to the polynomial manifold width.

\begin{theorem} \label{th:polynomial_manifold_width}
	The polynomial manifold $(n,p)$-width is bounded from below with
	\begin{align*}
		\kolWidth[\numTotal{n}{p}]{\subSet}{\linSpace}
\leq \polyKolWidthOne[n]{p}{\subSet}{\linSpace}
\leq \polyManiWidth[n]{p}{\subSet}{\linSpace}.
	\end{align*}
\end{theorem}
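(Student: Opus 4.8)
The plan is to observe that the left inequality $\kolWidth[\numTotal{n}{p}]{\subSet}{\linSpace} \leq \polyKolWidthOne[n]{p}{\subSet}{\linSpace}$ is precisely the lower bound already established in \Cref{thm:bounds}, so the only genuinely new content is the right inequality $\polyKolWidthOne[n]{p}{\subSet}{\linSpace} \leq \polyManiWidth[n]{p}{\subSet}{\linSpace}$. The guiding idea is that the polynomial manifold width carries an additional \emph{continuity} constraint on how each element of $\subSet$ is assigned its reduced coordinates, namely through the encoder $e \in C^0(\linSpace, \field^l)$, whereas the polynomial Kolmogorov width is free to select, for each $s \in \subSet$ independently, the genuinely closest point on the submanifold. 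Discarding this continuity requirement can only decrease the width, which is exactly the asserted inequality, and this comparison of the two variational problems is what I would carry out.

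Concretely, I would fix an arbitrary admissible pair for the manifold width: a polynomial mapping $\dec[l]{p}$ with $l \leq n$ together with a continuous encoder $e \in C^0(\linSpace, \field^l)$. Its image $\submnf[l]{p} := \img(\dec[l]{p})$ is a polynomially mapped submanifold of degree $p$ with $\dim(\submnf[l]{p}) \leq l \leq n$, hence an admissible competitor in the infimum defining $\polyKolWidthOne[n]{p}{\subSet}{\linSpace}$. For every $s \in \subSet$ the point $\dec[l]{p}(e(s))$ lies in $\submnf[l]{p}$, so the best approximation of $s$ on $\submnf[l]{p}$ is at least as good as this particular choice, i.e. $\inf_{t \in \submnf[l]{p}} \norm{s - t} \leq \norm{s - \dec[l]{p}(e(s))}$. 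Taking the supremum over $s \in \subSet$ then yields $\dist(\subSet, \submnf[l]{p}) \leq \sup_{s \in \subSet} \norm{s - \dec[l]{p}(e(s))}$.

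It remains to pass to the two infima. Since $\submnf[l]{p}$ is admissible, the left-hand side is bounded by $\polyKolWidthOne[n]{p}{\subSet}{\linSpace} \leq \dist(\subSet, \submnf[l]{p})$; chaining this with the previous estimate and taking the infimum over all admissible pairs $(\dec[l]{p}, e)$ on the right-hand side gives $\polyKolWidthOne[n]{p}{\subSet}{\linSpace} \leq \polyManiWidth[n]{p}{\subSet}{\linSpace}$, as claimed. Combining this with the inequality from \Cref{thm:bounds} closes the full chain.

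I expect no real analytical obstacle here, as the argument is a direct comparison of two infima rather than a quantitative estimate. The single point requiring care is the admissibility bookkeeping: one must check that the decoder entering the manifold width produces a submanifold whose dimension respects the constraint $\dim \leq n$ of the Kolmogorov width, which holds because $\img(\dec[l]{p})$ has dimension at most $l \leq n$. Notably, the continuity of the encoder is never invoked in the estimate, and this unused hypothesis is precisely the slack that separates the two widths.
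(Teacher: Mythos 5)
Your proof is correct, and it takes a genuinely different --- and in fact cleaner --- route than the paper. The paper's proof first claims, for a fixed decoder $\dec[l]{p}$ and an arbitrary encoder $e$, that $\sup_{s \in \subSet}\norm{s-\dec[l]{p}(e(s))} \geq \inf_{\xred \in \field^l}\sup_{s \in \subSet}\norm{s-\dec[l]{p}(\xred)}$ (its inequality \eqref{eq:est_inf_encoder}), and then applies the max--min inequality in a second step to move the infimum inside the supremum and recognize $\polyKolWidthOne[n]{p}{\subSet}{\linSpace}$. You instead compare the two variational problems directly: since $\dec[l]{p}(e(s))$ lies on $\submnf[l]{p}=\img(\dec[l]{p})$, you get $\inf_{t\in\submnf[l]{p}}\norm{s-t}\leq\norm{s-\dec[l]{p}(e(s))}$ pointwise in $s$, and then take the supremum over $s$ and the infima in the correct order. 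Your route is preferable, because the paper's intermediate inequality \eqref{eq:est_inf_encoder} is not valid as stated: on its left-hand side the reduced coordinate $e(s)$ may vary with $s$ inside the supremum, while on its right-hand side a single $\xred$ must serve all $s$ simultaneously; taking $\linSpace=\R$, $\subSet=\{0,10\}$, $\dec[1]{1}=\mathrm{id}$ and $e=\mathrm{id}$ gives $0$ on the left and $5$ on the right. The correct intermediate statement places the infimum over $\xred$ inside the supremum over $s$ --- which is exactly what your argument produces, after which no max--min step is needed at all. Your admissibility bookkeeping (that $\img(\dec[l]{p})$ is an at most $l\leq n$-dimensional polynomially mapped submanifold and hence a competitor in the infimum defining $\polyKolWidthOne[n]{p}{\subSet}{\linSpace}$) matches the paper's, your treatment of the left inequality as a direct citation of \Cref{thm:bounds} is the same as the paper's, and your observation that the continuity of $e$ is never used also holds for the paper's intended argument.
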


\begin{proof}
	For a fixed polynomial map $\dec[l]{p}$, it holds for all encoders $e \in C^0(\linSpace, \field^l)$
	\begin{align*}
\sup_{s \in \subSet}
		\norm{s - \dec[l]{p}(
			\underbrace{e(s)}_{\in \field^l}
)}
\geq \inf_{\xred \in \field^l}\,
\sup_{s \in \subSet}
		\norm{s - \dec[l]{p}(\xred)}
	\end{align*}
	and thus
	\begin{align}\label{eq:est_inf_encoder}
		\inf_{\substack{
	e \in C^0(\linSpace, \field^l)
}}\,
\sup_{s \in \subSet}
		\norm{s - \dec[l]{p}(e(s))}
\geq \inf_{\xred \in \field^l}\,
\sup_{s \in \subSet}
		\norm{s - \dec[l]{p}(\xred)}.
	\end{align}
	Together with the max--min inequality (see e.g.\ \cite[Equation (5.46)]{boyd2004convex})
	in step $(\ast)$,
	we observe
	\begin{align*}
		\polyManiWidth[n]{p}{\subSet}{\linSpace}
&= \inf_{\substack{
	\dec[l]{p} \text{ poly.~map.}\\
	l \leq n
}}\;
\inf_{\substack{
	e \in C^0(\linSpace, \field^l)
}}\,
\sup_{s \in \subSet}
		\norm{s - \dec[n]{p}(e(s))}\\
	&\stackrel{\eqref{eq:est_inf_encoder}}{\geq}
\inf_{\substack{
	\dec[l]{p} \text{ poly.~map.}\\
	l \leq n
}}\;
\inf_{\xred \in \field^l}\,
\sup_{s \in \subSet}
\norm{s - \dec[n]{p}(\xred)}\\
	&\stackrel{(\ast)}{\geq}
\inf_{\substack{
	\dec[l]{p} \text{ poly.~map.}\\
	l \leq n
}}\;
\sup_{s \in \subSet}\,
\inf_{\xred \in \field^l}
\norm{s - \dec[n]{p}(\xred)}\\
	&= \polyKolWidthOne[n]{p}{\subSet}{\linSpace}\\
	&\stackrel{\eqref{Eq:bounds}}{\geq} \kolWidth[\numTotal{n}{p}]{\subSet}{\linSpace}
	\end{align*}
	where we use in the second to last step that choosing a polynomially mapped manifold $\submnf[l]{p}$ is equivalent to choosing a polynomial map $\dec[l]{p}$ and choosing an element from $t \in \submnf[l]{p}$ is then equivalent to choosing a reduced coordinate $\xred \in \field^l$.
\end{proof}

Note that the presence of the lower bound allows to transfer the decay rates of Kolmogorov $n$-widths to the polynomial manifold $(n,p)$-widths from \eqref{eq:kol_width_decay_rates} to
\begin{align*}
	&\polyManiWidth[n]{p}{\subSet}{\linSpace} \geq M n^{-\alpha p},&
	&\text{or}&
	&\polyManiWidth[n]{p}{\subSet}{\linSpace} \geq M e^{-a n^{\alpha p}},
\end{align*}
respectively. The proof works analogously to the proof of \Cref{corollary:decay_rates}.

\begin{remark}[Best-Possible Approximation Error vs. Error in Reduced Simulation]
\label{rem:err_in_red_sim}
Note that all the above is a discussion about the best-possible approximation error or respective manifold width.
However, it is by no means guaranteed that the approximation provided by solving the ROM realizes this best-approximation.
Thus, the error between the FOM and the ROM solution, the \emph{error in the reduced simulation}, may be much higher than these two quantities.

This becomes relevant, if one thinks about choosing a ROM based on a polynomially mapping $\dec{p}$ of order $p$ or a (classical) ROM based on an affine linear map $\dec[1]{\numTotal{n}{p}}$.
Although both of these choices share the same lower bound on the best-possible approximation error,
the error in the reduced simulation may differ.
In \cite{BARNETT2022111348}, it is observed experimentally for a complex three-dimensional CFD benchmark problem with a quadratic mapping $\dec[2]{n_2}$ and $n_2=39$ that the error in the reduced simulation behaves as expected from the best-approximation error presented in our paper:
A ROM based on the quadratic mapping $\dec[2]{n_2}$ matches the error in the reduced simulation of a (classical) ROM based on a linear mapping of size $$n_1 = 627 \approx 820 = \numTotal{n_2}{2},$$ 
while at the same time, the offline and online runtimes are improved with the ROM based on the quadratic approach.
This supports the idea that, despite the lower bound on the best-possible approximation error, polynomial mappings are relevant in the application.
\end{remark}

\begin{remark}[Extension to More General Nonlinear Mappings $\mapKronGen{n}$]
Although \cref{thm:bounds} refers to polynomially mapped manifolds only,
it also holds for a more general setting of \emph{nonlinearly mapped reduced coordinates}.
If we consider the structure of $\decGen := \mapLinGen{n} \circ \mapKronGen{n}$ in \cref{lem:lin_embed} as the composition of a general nonlinear map $\mapKronGen{n}: \field^{n} \rightarrow \field^{t}$ for some $t \in \N$ and a linear mapping $\mapLinGen{n}: \field^{t} \rightarrow \linSpace$, one could also choose the linear mapping, e.g., from modes of the proper orthogonal decomposition and the nonlinear mapping (a) as an autoencoder \cite{fresca2022pod} or (b) a general artificial neural network \cite{BARNETT2022b}.
Then, following the ideas of \cref{thm:bounds}, the best-possible approximation error and manifold width over decoders of the proposed form can be bounded from below by the classical Kolmogorov $t$-width $\kolWidth[t]{\solmnf}{\linSpace}$.
\end{remark}

\section{Conclusion}
In this paper, we derived bounds for the best-possible approximation error for ROMs based on polynomially mapped manifolds. If bounds on the Kolmogorov $n$-width of the snapshot set are known, we can derive an upper and a lower bound on the best-possible approximation error depending on the order of the polynomial.
We showed that the class of convergence does not change, but the rate of convergence can be improved, depending on the degree of the polynomial. Future work could be related to the questions on how sharp this lower bound is as well as on what are good algorithms for the construction of polynomially mapped manifolds such that this best-approximation bound could possibly be attained within some tolerance.\\

\acknowledgements{{%
PB and BH are funded by Deutsche Forschungsgemeinschaft (DFG, German Research Foundation) under Germany's Excellence Strategy - EXC 2075 - 390740016 and DFG Project No.~314733389.
We acknowledge the support by the Stuttgart Center for Simulation Science (SimTech).}
}

\printbibliography

\appendix
\section{Appendix} \label{appendix}

We provide a rigorous argument by induction for the first part of the proof of \cref{corollary:decay_rates}, i.e., we show that $\numTotal{n}{p} \le n^{p}$ for all $p\ge 2, n\ge 4$.

\begin{lemma}
For all $p\ge 2, n\ge 4$, it holds that
\begin{align*}
	\left(\numTotal{n}{p} =\ \right) \ \sum_{k=0}^p \binom{n+k-1}{k}  \le n^{p}.
\end{align*}
\end{lemma}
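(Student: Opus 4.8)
The plan is to first collapse the sum into a single binomial coefficient and then verify an elementary inequality by induction on $p$.

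First I would establish the combinatorial identity
\begin{align*}
\sum_{k=0}^p \binom{n+k-1}{k} = \binom{n+p}{p}.
\end{align*}
This follows by rewriting each summand as $\binom{n+k-1}{k} = \binom{n+k-1}{n-1}$ and applying the hockey-stick identity $\sum_{j=0}^p \binom{n-1+j}{n-1} = \binom{n+p}{n}$, together with the symmetry $\binom{n+p}{n} = \binom{n+p}{p}$. (Alternatively, this identity is itself a one-line induction on $p$ via Pascal's rule.) With this in hand, the claim $\numTotal{n}{p} = \sum_{k=0}^p\binom{n+k-1}{k} \le n^p$ becomes equivalent to the reduced inequality
\begin{align*}
\binom{n+p}{p} \le n^p, \qquad p\ge 2,\ n\ge 4.
\end{align*}

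Next I would prove this reduced inequality by induction on $p$ for fixed $n \ge 4$. For the base case $p=2$ I compute $\binom{n+2}{2} = \tfrac12(n+1)(n+2)$, so that $\binom{n+2}{2} \le n^2$ reduces to $n^2 - 3n - 2 \ge 0$, which holds for all $n\ge 4$ since the left-hand side is increasing in $n$ and nonnegative at $n=4$. For the inductive step I use the multiplicative recurrence and the induction hypothesis,
\begin{align*}
\binom{n+p+1}{p+1} = \frac{n+p+1}{p+1}\,\binom{n+p}{p} \le \frac{n+p+1}{p+1}\, n^p.
\end{align*}
It then remains to check $\tfrac{n+p+1}{p+1} \le n$, which rearranges to $p(n-1) \ge 1$; this is trivially satisfied, and yields $\binom{n+p+1}{p+1} \le n^{p+1}$, closing the induction.

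I expect the main (and essentially only) obstacle to be recognizing the closed form $\binom{n+p}{p}$ for the sum; once the hockey-stick identity is applied, the remaining inequality is elementary and the inductive step collapses to the single scalar condition $p(n-1)\ge 1$. It is worth noting that this step condition already holds for $p\ge 1$, $n\ge 2$, so the strength of the hypotheses $p\ge 2$, $n\ge 4$ is needed only at the base case: indeed $n=3$, $p=2$ gives $\binom{5}{2}=10>9=n^p$, showing that the bound $n\ge 4$ cannot be relaxed.
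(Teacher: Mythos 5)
Your proof is correct and follows essentially the same route as the paper's: both collapse the partial sum to $\binom{n+p}{p}$ (you justify this via the hockey-stick identity, which the paper uses without comment), handle the base case $p=2$ by the same quadratic inequality $n^2-3n-2\ge 0$, and close the induction with the recurrence $\binom{n+p+1}{p+1}=\frac{n+p+1}{p+1}\binom{n+p}{p}$ and the condition $p(n-1)\ge 1$. Your added remark that $n\ge 4$ is only needed in the base case (with the counterexample $n=3$, $p=2$) is a nice sharpening not present in the paper.
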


\begin{proof}
We provide a proof by induction over $p$ for all $n \ge 4$. \newline
1) Initial case for $p=2$ and $n \ge 4$ arbitrary: 
\begin{align*}
	\sum_{k=0}^2 \binom{n+k-1}{k} =  \binom{n-1}{0} +  \binom{n}{1} +  \binom{n+1}{2} =  1+ n +\frac{n (n+1)}{2} = 1 + \frac{3}{2} n + \frac{1}{2}n^2.
\end{align*}
We thus need to verify that 
\begin{align*}
	1 + \frac{3}{2} n + \frac{1}{2}n^2 \le n^2,  
\end{align*}
which is equivalent to 
\begin{align*}
	0 \le  \frac{1}{2}n^2 -  \frac{3}{2} n -1 =: \varphi(n). 
\end{align*}
The function $\varphi(n)$ has roots $r_{1,2}= \frac{3 \pm \sqrt{17}}{2}$, with $r_1 < r_2$. As the function $\varphi(n)$ is a parabola with positive curvature, we obtain that $\varphi(r) \ge 0$ for all $r \ge r_2  \approx 3.5616$ and thus $\varphi(n) \ge 0$ for all $n \ge 4$. \newline
2) Induction assumption (IA): We assume that for fixed $p\ge 2$ and for all $ n\ge 4$ it holds that
\begin{align*}
	\sum_{k=0}^p \binom{n+k-1}{k} = \binom{n+p}{p} \le n^{p}.
\end{align*}
3) Induction step $p \rightarrow p+1$, $n \ge 4$ fixed, i.e., we prove that 
\begin{align}\label{Eq:induction_step_p}
	\sum_{k=0}^{p+1} \binom{n+k-1}{k} = \binom{n+p+1}{p+1} \le n^{p+1}.
\end{align}
We start by using the definition of the binomial formula and apply the (IA)
\begin{align*}
	\binom{n+p+1}{p+1} = \frac{n+p+1}{p+1} \binom{n+p}{p} \stackrel{\text{(IA)}}{\le} \frac{n+p+1}{p+1} n^p.
\end{align*}
To finish this part of the proof, we need to show that 
\begin{align*}
\frac{n+p+1}{p+1} \le n,
\end{align*} 
which can be reformulated to 
\begin{align*}
	0 \le n(p+1) - (n+p+1) = p(n-1) -1.
\end{align*}
The latter inequality is fulfilled for all $p\ge2, n\ge 4$, thus \eqref{Eq:induction_step_p} is proven.
\end{proof}

\end{document}